\theoremstyle{plain}
\newtheorem{thm}{Theorem}[section]
\newtheorem{lemma}[thm]{Lemma}
\newtheorem{cor}[thm]{Corollary}
\theoremstyle{definition}
\newtheorem{defn}[thm]{Definition}
\theoremstyle{remark}
\newtheorem{remark}{Remark}
\newtheorem{example}{Example}
\numberwithin{thm}{section}
\newcommand\N{\mathbb{N}}
\newcommand\Z{\mathbb{Z}}
\numberwithin{equation}{section}
\begin{document}
\title{Enumeration of k-Exceedance Lattice Paths with an Application to Comparing Chains of Order Statistics}
\author{Charles Hoffman and Corey Manack}
\subjclass[2010]{05A15,05A19,60G50}
\keywords{Lattice path enumeration, Catalan numbers, ballot numbers, order statistics}
\maketitle
\begin{abstract}
We enumerate the number of monotonic lattice paths starting at $(0,0)$ and terminating at $(m,n)$ in which $l$ of the first $k$ steps lie below the line $y=x$ (equation \eqref{equ:kexceedancepaths}). This combinatorial formula is a summation whose terms are a product of Catalan numbers, ballot numbers and binomial coefficients. We apply these combinatorial formulas to failure analysis by deriving a probability distribution that compares the performance of a $k$-out-of-$m$ system to a $k$-out-of-$n$ system of continuous, independent and identically distributed random variables. 
\end{abstract}
\section{Introduction}
Lattice path combinatorics have a variety of applications: ballot numbers and its variants \cite{HN2011)}, moments of orthogonal polynomials \cite{HN2012)}, Smirnov test statistics, knock out tournaments and convolution identities (see, e.g. \cite{Nar79})), to name a few. The statistical applications arise from comparing collections of independent and identically distributed random variables. In this paper, we enumerate so-called $k$-exceedant paths and produce a probability distribution to compare the failure rates between two parallel systems whose components consist of independent and identically distributed random variables. We enumerate $k$-exceedant paths (definition \ref{defn:exceedant}) in $\S\ \ref{sec:latticepaths}$ then derive the probability distribution for order statistics in $\S\ \ref{sec:chainsofstats}$. We provide an interpretation of this model to random walks in $\S\ \ref{sec:rws}$ and lastly give asymptotics of  equation \eqref{equ:kexceedancepaths} in $\S\ \ref{sec:asymptotics}$. 
\section{Lattice paths}  
\label{sec:latticepaths}
A {\emph{lattice path}} in $\N^2$ is a sequence of points which start at $(0,0)$ and take steps in the $(1,0)$ (up) and $(0,1)$ (right) directions. Denote by $\Gamma_{m,n}$ the set of lattice paths from $(0,0)$ to $(m,n)$. 
\begin{defn}
\label{defn:exceedant}
A path $\gamma \in \Gamma_{m,n}$ has $\textbf{horizontal exceedance}$ $l$ if $\gamma$ has $l$ horizontal edges below the line $y=x$, $\textbf{vertical exceedance}$ $l$ if $\gamma$ has $l$ vertical edges above the line $y=x$, $\textbf{$k$-horizontal exceedance}$ equal to $l$ if $l$ of the first $k$ horizontal edges lie below the line $y = x$ and $\textbf{$k$-vertical exceedance}$ $l$ if $\gamma$ has $l$ of its first $k$ vertical edges lie above the line $y=x$.    
\end{defn}
We write $HE(\gamma)$, $VE(\gamma)$, $HE_k(\gamma)$, $VE_k(\gamma)$ for the horizontal, vertical, $k$-horizontal and $k$-vertical exceedance of $\gamma$ respectively. 
\begin{example} 
The path $\gamma = RUURRRRURU\in \Gamma_{6,4}$ has horizontal exceedance $5$ and vertical exceedance $1$. If we set $k = 4$, then $\gamma$ has $4$-horizontal exceedance $3$ and $4$-vertical exceedance $1$. The $4$-exceedances of $\gamma$ are depicted in Figure \ref{Fig2}. 
\begin{center}
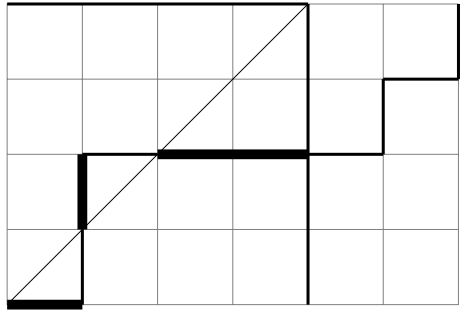
\begin{figure}[h!]
\begin{tikzpicture}
\draw[help lines] (0,0) grid (6,4);
\draw [line width=1.3mm] (0,0) -- (1,0);
\draw [line width=.4mm] (1,0) -- (1,1);
\draw [line width=1.3mm] (1,1) -- (1,2);
\draw [line width=.4mm] (1,2) -- (2,2);
\draw [line width=1.3mm] (2,2) -- (4,2);
\draw [line width=.4mm] (4,2) -- (5,2);
\draw [line width=.4mm] (5,2) -- (5,3);
\draw [line width=.4mm] (5,3) -- (6,3);
\draw [line width=.4mm] (6,3) -- (6,4);
\draw [line width=.4mm] (0,4) -- (4,4);
\draw [line width=.4mm] (4,0) -- (4,4);
\draw [line width=.1mm] (0,0) -- (4,4);
\end{tikzpicture}
\caption{$HE_4(\gamma)=3$, $VE_4(\gamma)=1$.}
\label{Fig2}
\end{figure}
\end{center}
\end{example}
We focus on enumerating the exceedant $l$ and $k$-exceedant $l$ paths $\gamma\in \Gamma_{m,n}$. We write $\# (x,y,l)$ for the number of paths $\eta\in \Gamma_{x,y}$ satisfying $HE(\eta)=l$. Clearly, $\# (x,y,l) = 0$ if $l > x$. When $x=y$, the Chung-Feller theorem \cite{CF1949} states that lattice paths terminating at $(i,i)$ are partitoned uniformly among all possible exceedances. In particular, $\#(i,i,l)=C_i$ the $i$th Catalan number. The next lemma combines the previous two facts. 
\begin{thm}[Chung-Feller]
\label{Chung-Feller}
\begin{equation}
\label{y=x}
\# (i,i,l) = \left\{
        \begin{array}{ll}
            C_i =\frac{1}{i+1}{2i\choose i} & \quad 0\leq i \leq l \\
            0 & \quad \text{otherwise} \\
        \end{array}
    \right.
\end{equation}
\end{thm}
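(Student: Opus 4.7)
The case $l > i$ is immediate: a path in $\Gamma_{i,i}$ contains only $i$ horizontal edges, so the number lying below $y=x$ cannot exceed $i$, making $\#(i,i,l) = 0$ whenever $l > i$. The substantive case is $0 \leq l \leq i$, the content of the Chung-Feller equidistribution theorem.

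My plan is to establish the stronger claim that $\#(i,i,l)$ is independent of $l$ over $\{0, 1, \ldots, i\}$. Because the $i+1$ classes partition $\Gamma_{i,i}$, this forces each class to have size $|\Gamma_{i,i}|/(i+1) = \binom{2i}{i}/(i+1) = C_i$, which is the desired formula. For the equidistribution itself, the natural approach is to construct, for each $l \in \{0, \ldots, i-1\}$, an explicit bijection $\phi_l$ from $\{\gamma : HE(\gamma) = l\}$ to $\{\gamma : HE(\gamma) = l+1\}$. The natural framework is the arch decomposition of $\gamma$ at successive visits to the diagonal $y=x$: each primitive arch lies either entirely above or entirely below the diagonal, and $HE(\gamma)$ equals the sum of the horizontal half-lengths of the below-arches.

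The main obstacle is that naively toggling an arch's orientation shifts $HE$ by the full half-length of that arch rather than by exactly one, so a subtler surgery is required to increment $HE$ by a single unit; this is the combinatorial heart of Chung and Feller's argument and of later refinements. As an alternative that avoids the explicit surgery, I would set up the bivariate generating function $P(x,y) = \sum_{i,l} \#(i,i,l)\, x^i y^l$. Decomposing each path into primitive arches and weighting a positive (resp.\ negative) arch of half-length $k$ by $x^k$ (resp.\ $x^k y^k$) gives $P(x,y) = 1/(1 - xC(x) - xyC(xy))$ where $C(x) = \sum_i C_i x^i$ is the Catalan series. Applying the functional equation $C(x) = 1 + xC(x)^2$ twice (to $C(x)$ and to $C(xy)$) simplifies the denominator so that $P(x,y) = (C(x) - yC(xy))/(1-y)$. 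The right-hand side equals $\sum_i C_i x^i(1 + y + \cdots + y^i)$, from which the uniformity in $l$ and the Catalan count both fall out by reading off coefficients.
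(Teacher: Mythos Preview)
The paper does not supply its own proof here: it records the Chung--Feller theorem as a classical result, citing \cite{Chung-Feller} for the original and \cite{Goulden-Serrano2003} for a bijective proof, and then simply packages that citation together with the trivial observation that $\#(i,i,l)=0$ when $l>i$. Your proposal therefore does strictly more than the paper. The generating-function argument you outline is correct: the arch decomposition gives $P(x,y)=1/(1-xC(x)-xyC(xy))$, and expanding $(C(x)-yC(xy))(1-xC(x)-xyC(xy))$ using $xC(x)^2=C(x)-1$ once at $x$ and once at $xy$ collapses it to $1-y$, so indeed $P(x,y)=\sum_i C_i\,x^i(1+y+\cdots+y^i)$ and the equidistribution follows. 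Your bijective paragraph, by contrast, stops precisely where the real work begins (the ``subtler surgery'' that increments $HE$ by exactly one); the paper's cited reference \cite{Goulden-Serrano2003} is one clean way to complete that route. Finally, you have silently and correctly read the paper's displayed condition ``$0\le i\le l$'' as the intended ``$0\le l\le i$''.
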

\noindent A bijective proof of the Chung-Feller theorem can be found in \cite{GS2003}.\\
\begin{lemma}
\label{y<x}
For $\gamma \in \Gamma_{x,y}$ and $0\leq k \leq \min\{x,y\}$,
\begin{align}
HE_k(\gamma) &+ VE_k(\gamma) = k. 
\end{align}
\end{lemma}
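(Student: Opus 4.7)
My plan is to establish a sharper pointwise identity and then sum it. Specifically, I will show that for every index $j$ with $1\le j\le k$, exactly one of the following two events occurs: the $j$-th horizontal edge of $\gamma$ lies below the line $y=x$, or the $j$-th vertical edge of $\gamma$ lies above it. Summing the indicators of these two events over $j=1,\dots,k$ will then immediately yield $HE_k(\gamma)+VE_k(\gamma)=k$.

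To prove this pointwise dichotomy, I would first translate the geometric conditions into statements about step-indices along $\gamma$. Write $p_j^H$ for the position of the $j$-th horizontal edge in the sequence $\gamma$. The initial $p_j^H-1$ steps consist of $j-1$ horizontal and $p_j^H-j$ vertical edges, so the $j$-th horizontal edge begins at the lattice point $(j-1,\,p_j^H-j)$. Consequently, this edge lies below $y=x$ (in the sense used in the proof of Lemma \ref{correspondence}) precisely when $p_j^H-j<j$, i.e.\ when $p_j^H\le 2j-1$. A symmetric calculation shows that the $j$-th vertical edge, appearing at position $p_j^V$, lies above $y=x$ iff $p_j^V\le 2j-1$.

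Now consider the prefix of $\gamma$ of length $2j-1$, and let $a$ denote the number of horizontal edges in this prefix, so there are $2j-1-a$ vertical edges. Because $2j-1$ is odd, exactly one of $a\ge j$ or $2j-1-a\ge j$ holds. In the first case $p_j^H\le 2j-1$ while $p_j^V>2j-1$, so only the ``horizontal below'' event occurs; the second case is symmetric. The hypothesis $k\le\min\{x,y\}$ guarantees that the $j$-th horizontal and $j$-th vertical edges actually exist for every $j\le k$, so summing the pointwise identity $\mathbf{1}[p_j^H\le 2j-1]+\mathbf{1}[p_j^V\le 2j-1]=1$ from $j=1$ to $k$ gives the lemma.

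I do not anticipate a serious obstacle here; the whole argument reduces to a single pigeonhole observation on odd-length prefixes. The one point requiring care is to keep the strict-versus-weak inequalities in the translation consistent with the authors' convention for what it means for an edge to lie ``below'' or ``above'' $y=x$, as introduced in the definition of exceedance and used in the proof of Lemma \ref{correspondence}.
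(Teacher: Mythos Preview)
Your argument is correct. You establish the sharper indexwise statement that for every $j\le k$ exactly one of the events ``the $j$-th horizontal edge is below $y=x$'' and ``the $j$-th vertical edge is above $y=x$'' occurs, via the parity observation on the $(2j-1)$-step prefix; summing gives the lemma. Your translation of the geometric conditions into the inequalities $p_j^H\le 2j-1$ and $p_j^V\le 2j-1$ is consistent with the paper's conventions (as one can verify against Example~\ref{path}, where $HE(\gamma)=5$ and $VE(\gamma)=1$), and the hypothesis $k\le\min\{x,y\}$ is exactly what guarantees both that the relevant $j$-th edges exist and that the prefix of length $2j-1$ is well defined.

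The paper's own proof proceeds differently: it first replaces $\gamma$ by its initial segment up to the boundary of the $k\times k$ box, extends that segment uniquely to a path $\tilde\gamma\in\Gamma_{k,k}$ without changing either $k$-exceedance, and then argues globally that if $HE_k(\tilde\gamma)=l$ then $\tilde\gamma$ has $2l$ edges below $y=x$ and hence $2(k-l)$ above, forcing $VE_k(\tilde\gamma)=k-l$. So the paper's argument is a total-edge count inside $\Gamma_{k,k}$, whereas yours is a pointwise pairing at each index $j$. Your route avoids the extension construction entirely and yields a slightly stronger statement (an explicit index-by-index dichotomy); the paper's route is more geometric and makes transparent why the factor of $2$ appears, which feeds naturally into the random-walk interpretation in the final section.
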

\begin{proof}
Observe that $k$-exceedance of a path $\gamma\in\Gamma_{x,y}$ is determined by the segment whose terminal vertex lies on the line $y=k$ or $x=k$. Such a segment extends uniquely to a path $\tilde{\gamma}\in \Gamma_{k,k}$ by appending edges from this terminal vertex to $(k,k)$. So $HE_k(\gamma) = HE_k(\tilde{\gamma})$ and $VE_k(\gamma) = VE_k(\tilde{\gamma})$. By monotonicity, $\tilde{\gamma}$ has $2l$ edges below the line $y =x$, and each path in $\Gamma_{k,k}$ has exactly $2k$ edges, so $\tilde{\gamma}$ has $2(k-l)$ edges above $y=x$. Hence $VE_k(\tilde{\gamma}) = k-l$, and
\[HE_k(\gamma) + VE_k(\gamma) = HE_k(\tilde{\gamma}) + VE_k(\tilde{\gamma}) =  k\]
which proves the lemma.
\end{proof}
\begin{cor}
\label{y<x2}
For $\gamma \in \Gamma_{x,y}$. 
\begin{align}
HE(\gamma) &+ VE(\gamma) = \max\{x,y\}.
\end{align}
\end{cor}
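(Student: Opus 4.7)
The plan is to bootstrap from Lemma \ref{y<x} by choosing $k = \min\{x, y\}$ and then accounting separately for the edges of $\gamma$ that lie outside the $k \times k$ box.

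Without loss of generality I would assume $y \leq x$; the other case follows by reflecting $\gamma$ through the line $y = x$, which swaps the roles of $HE$ and $VE$ and interchanges $x$ with $y$, leaving $\max\{x, y\}$ unchanged. Setting $k = y$ in Lemma \ref{y<x} immediately gives $HE_y(\gamma) + VE_y(\gamma) = y$. Two observations then finish the job. First, $VE_y(\gamma) = VE(\gamma)$, because $\gamma$ has only $y$ vertical edges in total, so the ``first $y$'' of them are in fact all of them. Second, $HE(\gamma) = HE_y(\gamma) + (x - y)$, because the final $x - y$ horizontal edges of $\gamma$ all lie below $y = x$: after the first $y$ horizontal steps the path has $x$-coordinate equal to $y$, and every subsequent horizontal edge runs from some $(i, j)$ to $(i+1, j)$ with $i \geq y$ and $j \leq y$, hence $j \leq i$, which places the edge below the diagonal in the sense used by the paper. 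Combining these observations gives $HE(\gamma) + VE(\gamma) = y + (x - y) = x = \max\{x, y\}$.

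The main (mild) obstacle is simply keeping the convention for ``below $y = x$'' consistent with what $HE$ actually counts; the worked example preceding the statement (where the horizontal edge from $(2,2)$ to $(3,2)$, which touches the diagonal, is counted as below) confirms that $j \leq i$ at the left endpoint is the correct criterion, and that is precisely the inequality satisfied by every horizontal edge appearing after the $y$th. No combinatorial input beyond Lemma \ref{y<x} and this bookkeeping is required.
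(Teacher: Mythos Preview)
Your argument is correct. You apply Lemma~\ref{y<x} with $k=\min\{x,y\}$ and then verify by hand that, when $y\le x$, the horizontal edges numbered $y+1,\dots,x$ all lie below the diagonal (so they contribute exactly $x-y$ to $HE$), while $VE_y=VE$ because there are only $y$ vertical edges in total. This yields $HE+VE=y+(x-y)=x=\max\{x,y\}$.

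The paper takes the complementary route: it extends $\gamma$ uniquely to a path $\tilde\gamma\in\Gamma_{M,M}$ with $M=\max\{x,y\}$ (appending only vertical edges when $y<x$), observes that those appended edges lie on or below the diagonal and hence leave both $HE$ and $VE$ unchanged, and then applies Lemma~\ref{y<x} to $\tilde\gamma$ with $k=M$. In effect the paper \emph{adds} edges that contribute nothing, whereas you \emph{isolate} edges that contribute fully; the two arguments are dual and equally short. The paper's version has the small advantage of reusing verbatim the ``extend to the $k\times k$ box'' idea already introduced in the proof of Lemma~\ref{y<x}, while yours has the advantage of making the bookkeeping for the extra $x-y$ horizontal edges completely explicit.
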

\begin{proof}
Notice that $\gamma \in \Gamma_{x,y}$ extends uniquely to a path $\tilde{\gamma} \in \Gamma_{\max\{x,y\},\max\{x,y\}}$ without affecting exceedances. Setting $k = \max\{x,y\}$, the corollary follows from lemma \ref{y<x}.   
\end{proof}
Combining lemma \ref{y<x} and corollary \ref{y<x2} reduces the task of computing $\#(x,y,l)$ to the case $y<x$.
Reflecting a path $\gamma\in \Gamma_{m,n}$ through the line $y=x$ and applying corollary \ref{y<x2}, yields the identity
\begin{align}
\label{reflect}
\# (m,n,l) = \# (n,m,\max\{m,n\} - l)
\end{align}
which reduces $\# (m,n,l)$ to the case $n<m$. 
\begin{defn}
The ballot number $b(m,n)$ is the number of lattice paths from $(0,0)$ to $(m,n)$ that stay strictly below the line $y=x$, except at $(0,0)$, and has many combinatorial interpretations. $b(m,n)$ is given by the formula
\begin{equation}
\label{avoid}
b(m,n) = {{m+n-1}\choose {n}} - {{m+n-1}\choose {n-1}} = \frac{m-n}{m+n}{{m+n}\choose n}
\end{equation}
\end{defn}
\begin{thm}
\label{thm:exceedant}
Fix a point $(x,y)\in\N^2$ and suppose $x>y$. As a function of $l$,
\begin{equation}
\label{equ:exceedant}   
\#(x,y,l) = \left\{
     \begin{array}{lr}
       \sum_{i= x-l}^yC_i b(x-i,y-i) & \text{if $x-y \leq l \leq x$}\\
       0 &  \text{otherwise}
     \end{array}
   \right. 
\end{equation}
\end{thm}
\begin{proof}
For fixed $l\in \N$, partition $A = \{\gamma \in \Gamma_{x,y} | HE(\gamma) = l\}$ into the (possibly empty) sets $A_0, A_1,\ldots, A_y$ where $A_i$ is the set of lattice paths $\gamma$ that pass the line $y=x$ at the point $(i,i)$. To ensure $A$ is nonempty, $l\leq x$, and the assumption $x>y$ guarantees a lattice path which terminates at $(x,y)$ has horizontal exceedance at least $x-y$,
\[\#(x,y,l) = 0, \text{ if $x>l$ or $l<x-y$}\]
Because $x>y$, the terminal segment $\gamma_t$ of $A_i$ from $(i,i)$ to $(x,y)$ must lie below the line $y=x$. Thus, the terminal segment of $\gamma\in A_i$ contributes $HE(\gamma_t)=(x-i)$ so $HE(\gamma_i)= l-(x-i)$ and to ensure $A_i$ is nonempty,
\[l-(x-i) \geq 0 \text{ or } i \geq x-l\]
thus nonempty $A_i$ run from $i= x-l$ to $i = y$. We call such indices admissible. To count nonempty $A_i$, we multiply the number of initial segments by the number of terminal segments as described above. By lemma \ref{Chung-Feller}, the number initial segments terminating at $(i,i)$ of horizontal exceedance $l-(x-i)$ is
\[C_i=\frac{1}{i+1}{{2i}\choose i}\]
By equation \ref{avoid}, the number of terminal (monotonic) paths from $(i,i)$ to $(x,y)$ that stay below the line $y=x$ is given by
\[b(x-i,y-i)= \frac{x-y}{x+y-2i}{{x+y-2i}\choose{y-i}}\]
making
\[|A_i| =C_ib(x-i,y-i).\]
Summing over admissible $i$ completes the theorem.
\end{proof}
\begin{remark}
By deleting the terminal edge, one can see that equation \ref{thm:exceedant} satisfies the necessary recursion relation
\[\#(x,y,l) = \#(x-1,y,l-1) + \#(x,y-1,l)\]
at those points $(x,y)$ with $x>y$.
\end{remark}
We have reached the main theorem, which provides a closed form solution to $\#(m,n,k,l)$, the number of lattice paths terminating at $(m,n)$ having $k$-horizontal exceedance $l$. 
\begin{thm}
\label{thm:kexceedancepaths}
\begin{equation}
\label{equ:kexceedancepaths}
\#(m,n,k,l)=\sum_{j=0}^{k-1}I(\{j\geq l\})|B_{(j,k)}(l)| + \sum_{j=0}^{k-1} I(\{j\geq k-l\})|B_{(k,j)}(l)|
\end{equation} where
\begin{align*}
|B_{(k,j)}(l)|    &=  {{m+n-k-j}\choose {m-k}}\sum_{i= k-l}^jC_i b(k-1-i,j-i)\text{,  $0\leq j < k-1$}\\
|B_{(j,k)}(l)|    &=  {{m+n-k-j}\choose {n-k}}\sum_{i= l}^jC_i b(k-1-i,j-i)\text{, $0\leq i < k-1$}\\
|B_{(k-1,k)}(l)| &=  {{m+n-2k+1}\choose{n-k}}C_{k-1}\\
|B_{(k,k-1)}(l)| &= {{m+n-2k+1}\choose{m-k}}C_{k-1}
\end{align*}
and $I$ is the indicator function.
\end{thm}
\begin{proof}
The proof is analogous to theorem \ref{thm:exceedant}; we begin by fixing $m,n,k,l\in\N$. As stated before, $k$-horizontal exceedance of $\gamma\in \Gamma_{m,n}$ is determined once its initial segment reaches either $x=k$ or $y=k$. Invoke theorem \ref{thm:exceedant} on each initial segment to partition 
\[B = \{\gamma \in \Gamma_{m,n} | HE_k(\gamma) = l\}\] into the (possibly empty) sets \[B_{(0,k)}, B_{(1,k)},\ldots, B_{(k-1,k)},B_{(k,k-1)},\ldots,B_{(k,1)},B_{(k,0)},\] where $B_{(j,k)}$ (resp. $B_{(k,j)}$) is the set of $\gamma\in\Gamma_{(m,n)}$ with $HE_k(\gamma) = l$ and initial segment which first crosses the line $y=k$ (resp. $x=k$) at the point $(j,k)$ (resp $(k,j)$). Notice that $k$-horizontal exceedance of $\gamma\in B_{(k,j)}$ is determined by the the terminal vertex $(k,j)$ of the initial segment. As $j<k$, this initial segment has its last edge below the line $y=x$ and by definition of $B_{(k,j)}$ this edge must be horizontal. Since this last edge contributes $1$ to the horizontal exceedance, $\#(k-1,j,l-1)$ counts the number of such initial segments in $B_{(k,j)}$. This argument holds except at $j=k-1$, where $\#(k-1,k-1,l-1)=C_{k-1}$ by lemma \ref{Chung-Feller}. Once $k$-exceedance is determined, we may append to the initial segment any of the 
\[{{m+n-k-j}\choose {m-k}}\]
 lattice paths from $(k,j)$ to $(m,n)$ to construct a path in $B_{(k,j)}$. Multiplying, we get for $j$ satisfying $k-l\leq j < k-1$,
\begin{align*}
|B_{(k,j)}|  &=   \#(k-1,j,l-1){{m+n-k-j}\choose {m-k}}\\
                   &=  {{m+n-k-j}\choose {m-k}}\sum_{i= k-l}^jC_i b(k-1-i,j-i)\text{, and}\\
|B_{(k,k-1)}| &= {{m+n-2k+1}\choose{m-k}}C_{k-1}
\end{align*}
Counting paths in $B_{(j,k)}$ is slightly different since $\gamma\in B_{(j,k)}$ has an initial segment $\gamma_i$ which determines $k$-horizontal exceedance with vertical terminal edge {\emph{above}} the line $y=x$. Since this terminal edge does not contribute to horizontal exceedance, $\#(j,k-1,l)$ counts the number of such initial segments in $B_{(j,k)}$. As before, we may append to $\gamma_i$ any of the 
\[{{m+n-k-j}\choose {n-k}}\] lattice paths from $(j,k)$ to $(m,n)$, to
construct a path in $B_{(j,k)}$.
Recalling the the reflection identity \eqref{reflect} we have, for $l\leq j < k-1,$
\begin{align*}
|B_{(j,k)}|  &=   \#(k-1,j,k-1-l){{m+n-k-j}\choose {n-k}}\\
                   &=  {{m+n-k-j}\choose {n-k}}\sum_{i= l}^jC_i b(k-1-i,j-i)\text{, and}\\
|B_{(k-1,k)}| &= {{m+n-2k+1}\choose{n-k}}C_{k-1}
\end{align*}
It remains to show which of the sets \[B_{(0,k)}, B_{(1,k)},\ldots, B_{(k-1,k)},B_{(k,k-1)},\ldots,B_{(k,1)},B_{(k,0)},\] are nonempty for a particular $l$. We separate $l$ into cases:\\
Case $l=0$: No horizontal lines lie below $y=x$ in the $k\times k$ box, so the nonempty sets are
\[B_{(0,k)},\ldots,B_{(k-1,k)}.\]
which sum over $l$, $0\leq l\leq k-1$ to get 
\begin{align*}
\#(m,n,k,0) &= \left(\sum_{j=0}^{k-1}|B_{(j,k)}|\right)\\ 
\end{align*}
Case $l=k$: All horizontal lines lie below $y=x$ in the $k\times k$ box, so the nonempty sets are
\[B_{(k,0)},\ldots ,B_{(k,k-1)}.\]
which sum over $l$, $0\leq l\leq k-1$ to get 
\begin{align*}
\#(m,n,k,k) &= \left(\sum_{j=0}^{k-1}|B_{(k,j)}|\right)\\ 
\end{align*}
In the case $0<l<k$, the condition $HE_k(\gamma) = l$ forces nonemptiness only in the sets  
\[B_{(k,k-l)},\ldots ,B_{(k,k-1)},B_{(k-1,k)},\ldots,B_{(l,k)}.\]
Which we sum to get 
\begin{align*}
\#(m,n,k,l) &= \sum_{j = k-l}^{k-1}|B_{(k,j)}| + \sum_{j=l}^{k-1}|B_{(j,k)}| 
\end{align*}
All cases align precisely with formula (\ref{equ:kexceedancepaths}) as written.
\end{proof}
\section{Chains of order statistics}
\label{sec:chainsofstats}
We now give an application of theorems \ref{thm:exceedant}, \ref{thm:kexceedancepaths} to failure analysis. Typically, failure rates between $k$-out-of-$n$ systems are compared by comparing their respective $k$th order statistics \cite{D1997}, \cite{KK2000}. This requires an understanding of the underlying component distributions. We focus on comparing the performance of a $k$-out-of-$m$ system $\{X_1,\ldots,X_m\}$ to a $k$-out-of-$n$ system $\{Y_1,\ldots,Y_n\}$, where all components have continuous, independent, and identical distributions. This model compares the failure rates between the two systems by fixing $k$ and comparing the $l$th order statistics for each $l$ between $1$ and $k$. This model relies solely the number of components ($m,n$) and {\emph{not}} the underlying distributions. This nonparametic model yields a new probability distribution for comparing collections of order statistics. Denote by $X^{(k)}$ the $k$th order statistic of $X_1,\ldots,X_n$, i.e., $X^{(k)}=\min_k\{X_1,\ldots,X_n\}$ is the $k$th smallest value of $\{X_1,\ldots, X_n\}$. Obviously, $k$ is meaningful only when $0<k\leq n$. The following lemma simply conveys the exchangeablility of continuous $i.i.d$ random variables for order statistics.
\begin{lemma}
\label{uniform}
Let $X_1,\ldots,X_n$ be a sequence of continuous, independent and identically distributed (c.i.i.d.) random variables, and let $X^{(i)}$ be the $i$-th order statistic. For $1\leq i,j \leq n$,  $\Pr[X^{(i)} = X_j ] = 1/n$.
\end{lemma}
\begin{proof}
For a fixed index $i$, $X^{(i)}$ must be one of the random variables $X_1,\ldots,X_n$. By continuity and independence of $X_1,\ldots,X_n$, $\Pr[X_j = X_k] = 0$ over all such pairs $j\neq k$. Hence,
\[\Pr[X^{(i)} = X_1] + \cdots + \Pr[X^{(i)} = X_n] = 1.\] 
That $X_1,\ldots,X_n$ are identically distibuted implies equality between any pair of summands. The lemma follows. 
\end{proof}
\begin{remark}
The assumption of continuity in Lemma \ref{uniform} is essential since $\Pr[X_i = X_j]>0$ is possible if $X_i, X_j$ were atomic.
\end{remark}
\begin{remark} 
Under the assumption of continuity, we may drop the case of equality when comparing $X_i$'s, since
\[\Pr\left(\bigcup_{i,j} \{X_i = X_j\}\right) = 0.\]
\end{remark}
Now, for integers $m\leq n$, let $X_1,\ldots, X_m, Y_1,\ldots Y_n$ be c.i.i.d. random variables mapping into a totally ordered space $(\Omega, <)$. We seek a probability distribution that compares the bottom $k$ performers in each of the systems $X = \{X_1,\ldots, X_m\}, Y= \{Y_1,\ldots Y_n\}$. Clearly $k\leq m$. \\
Our method of comparing chains of order statistics is induced from the order on $(\Omega, <)$. 
\begin{defn}
For $c.i.i.d$ random variables $X_1,X_2,\ldots,X_m$ mapping into $(\Omega, <)$, an (ascending) {\textbf{chain}} of length $d$ is the ordering
\[X_{i_1} < X_{i_2} < \cdots < X_{i_d}\] such that $X_{i_j} = X^{(j)}$ for each $1\leq j \leq d$. 
\end{defn}
\begin{defn}
\label{defn:order}
 Denote by $X^{((m))}$ the chain of order statistics
\[X^{(1)} < X^{(2)} < \cdots < X^{(m)}.\] 
Fix fixed $k$ and $l$ between $1$ and $k$, define the ordering $<_{k,l}$ on chains by the following:
\[ X^{((m))} <_{k,l} Y^{((n))}\]
if and only if $\left| \left\{i\mid 1\leq i\leq k \text{ and } X^{(i)} < Y^{(i)}\right\} \right| = l. $
\end{defn}    
Definition \ref{defn:order} says that $X^{((m))} <_{k,l} Y^{((n))}$ if, of the $k$ bottom performers, there are exactly $l$ instances when the $i$-th bottom performer from system $X$ underperformed the $i$-th bottom performer from system $Y$. The main result of this section (theorem \ref{thm:chainstopaths}) is to determine $\Pr\left(X^{((m))} <_{k,l} Y^{((n))} \right)$ for integers $m,n,k,l$ which satisfy $0\leq l\leq k\leq m\leq n$ 
\begin{thm}
\label{thm:chainstopaths}
\begin{equation}
\label{thm:chains}
\Pr\left(X^{((m))} <_{k,l} Y^{((n))} \right) = \frac{\#(m,n,k,l)}{{{m+n}\choose m}}  
\end{equation}
where $\#(m,n,k,l)$ counts the number of lattice paths from $(0,0)$ to $(m,n)$ of $k$-exceedance $l$ as in theorem \ref{equ:kexceedancepaths}.
\end{thm}
The proof of theorem \ref{thm:chainstopaths} comes from a correspondence between chains and lattice paths. Define $B_{m,n}$ to be the set of ascending chains of length $m+n$ from \[\{X_1,\ldots,X_m,Y_1,\ldots,Y_n\}\] and $C_{m,n}$ to be the set of ascending chains of length $m+n$ from the set \[\{X^{(1)},\ldots,X^{(m)},Y^{(1)},\ldots,Y^{(n)}\}.\] Each chain $c\in C_{m,n}$ corresponds uniquely to a pair of chains $X^{((m))}$, $Y^{((n))}$ and for any admissible $k$, one can determine which $l\in\N$ satisfies $X^{((m))} <_{k,l} Y^{((n))}$.
\begin{example}
For $k=4$, the chain
\[X^{(1)} < Y^{(1)} < Y^{(2)} < X^{(2)} < X^{(3)} < X^{(4)} < X^{(5)} < Y^{(3)} < X^{(6)} < Y^{(4)}\]
in $C_{6,4}$ satisfies
\[X^{(1)} < Y^{(1)}\]
\[X^{(2)} > Y^{(2)}\]
\[X^{(3)} < Y^{(3)}\]
\[X^{(4)} < Y^{(4)}\]
and thus satisfies $X^{((6))} <_{4,3} Y^{((4))}$.
\end{example}
\begin{lemma}
\label{thm:uniformchains}
Let  $X_1,\ldots,X_m,Y_1,\ldots,Y_n $ be a sequence c.i.i.d. random variables. Then the probability distribution on $C_{m,n}$ is uniform, with $$\Pr(c) = \frac{1}{{{m+n}\choose m}} = \frac{m!n!}{(m+n)!},\ \  c\in C_{m,n}$$
\end{lemma}
\begin{proof}
Lemma \ref{uniform} reduces the argument to counting chains in $B_{m,n}$. Observe, $C_{m,n}$ is the set of equivalence classes in $B_{m,n}$ under the action of $S_m \times S_n$ where the $S_m$ factor permutes $X_i$'s, and $S_n$ permutes $Y_j$'s. Therefore, each $c\in C_{m,n}$ contains $m!n!$ elements; it is clear $|B_{m,n}| = (m+n)!$. The lemma follows. 
\end{proof}
Thus, we have reduced the task of computing 
\[\Pr\left(X^{((m))} <_{k,l} Y^{((n))} \right)\]
to counting those chains in $C_{m,n}$ satisfying $X^{((m))} <_{k,l} Y^{((n))}$. Observe, $C_{m,n}$ is in bijective correspondence $\Gamma_{m,n}$; this can be seen by constructing a lattice path from reading a chain in ascending order, taking a north step when a $Y^{(i)}$ is encountered and an east step for$X^{(i)}$. Conversely, a chain can be constructed from $\gamma\in \Gamma_{m,n}$ by writing $X^{(i)}$ when the $i$th horizontal edge is encountered, and $Y^{(i)}$ when the $i$th vertical edge is encountered. Under this bijection, it is clear \[|C_{m,n}| = |\Gamma_{m,n}| = {{m+n}\choose m}.\]
\begin{example} 
\label{path} The chain
\[X^{(1)} < Y^{(1)} < Y^{(2)} < X^{(2)} < X^{(3)} < X^{(4)} < X^{(5)} < Y^{(3)} < X^{(6)} < Y^{(4)}\]
in $C_{6,4}$ corresponds to the path RUURRRRURU in Example 1.
\end{example} 
We are now ready to prove the main theorem.
\begin{proof}
Proof of theorem \ref{thm:chainstopaths}: As noted above, each $c \in C_{m,n}$ corresponds uniquely to some path $\gamma \in \Gamma_{m,n}$, and there is an integer $l$ for which 
\[ X^{((m))} <_{k,l} Y^{((n))}\] is satisfied by $c$. Then, for exactly $l$ indices between $1$ and $k$, $X^{(j)} < Y^{(j)}$. Equivalently stated, there are $l$ instances when $X^{(j)}$ is appended to an initial chain that contains at least as many $X^{(i)}$'s as $Y^{(i)}$'s. The corresponding statement in $\Gamma_{m,n}$ says there are exactly $l$ instances where the $j$-th horizontal edge appears before the $j$-th vertical edge in the initial segment of $\gamma$ lying in the $k\times k$ box. It is at these indices where a horizontal edge is appended to a path whose initial segment lies on or below the line $y = x$, and this happens exactly $l$ times inside the $k\times k$ box. Thus $HE_k(\gamma) = l$.  We have shown \[\Pr(\{c\in C_{m,n} | X^{((m))} <_{k,l}  Y^{((n))}\}) = \Pr(\{\gamma \in \Gamma_{m,n} | HE_k(\gamma) = l\}).\] The proof follows from theorem \ref{thm:uniformchains}.
\end{proof}
\begin{cor}
Let  $X_1,\ldots,X_m,Y_1,\ldots,Y_n $ be a sequence of c.i.i.d. random variables. Then, for fixed $k$,
\[\Pr(\{c\in C_{n,m} | X^{((m))} >_{k,l} Y^{((n))}\}) = \Pr(\{\gamma \in \Gamma_{m,n} | VE_k(\gamma) = l\})\]
\end{cor}
\begin{proof}
Follows by reflecting through the line $y=x$, and applying theorem \ref{thm:chainstopaths}.
\end{proof}
\section{Random Walks}
\label{sec:rws}
The failure rate model has an interesting interpretation in terms of random walks on $\Z$. 
The bijection between the sets
\[\{c\in C_{m,n} | X^{((m))} <_{k,l}  Y^{((n))}\} \text{ ,and } \{\gamma \in \Gamma_{m,n} | HE_k(\gamma) = l\}.\]
and the map $$(1,0)\to +1$$
                     $$(0,1)\to -1$$
establishes a bijection between the set $\Gamma_{m,n}$ and the set $W_{m,n}$ of length $m+n$ integer walks on $\Z$ that start at $x=0$ and terminate at $x=m-n$. As mentioned before, by monotonicity in $\Gamma_{m,n}$, any $\gamma$ that satisfies $HE_k(\gamma) = l$, has exactly $2l$ of its first $2k$ steps lying below the main diagonal. In $W_{m,n}$, these paths spend exactly $2l$ of their first $2k$ steps above $x=0$. Thus, if we let $T_{2k}\colon W_{(m,n)}\to \{0,2,\ldots,2k\}$ be the number of initial $2k$ steps lying above $x=0$, we have yet another interpretation of $X^{((m))} <_{k,l}  Y^{((n))}$:
\[\Pr\left(\{c\in C_{m,n} | X^{((m))} <_{k,l}  Y^{((n))}\}\right) = \Pr\left(\{w \in W_{m,n} | T_{2k}(w) = 2l\}\right)\]
\section{Asymptotics}
\label{sec:asymptotics}
We determine the asmptotics of $\#(m,n,k,l)$ for a few special cases. Recall that formula \eqref{equ:kexceedancepaths} given by  
\[\#(m,n,k,l)= \sum_{j=0}^{k-1}I(\{j\geq l\})|B_{(j,k)}(l)| + \sum_{j=0}^{k-1} I(\{j\geq k-l\})|B_{(k,j)}(l)|\]
counts the number of paths $\gamma\in\Gamma_{m,n}$ with $k$-exceedence equal to $l$. One case is to fix $k,l$, and set $m=n$. For $m$ sufficiently large,
\begin{align*}
|B_{(k,j)}(l)|    &\approx {{2m}\choose {m}}\sum_{i= k-l}^jC_i b(k-1-i,j-i) \text{,  $0\leq j < k-1$}\\
|B_{(j,k)}(l)|    &\approx {{2m}\choose {m}}\sum_{i= l}^jC_i b(k-1-i,j-i)\text{, $0\leq i < k-1$}\\
|B_{(k-1,k)}(l)| &\approx  {{2m}\choose{m}}C_{k-1}\\
|B_{(k,k-1)}(l)| &\approx {{2m}\choose{m}}C_{k-1}
\end{align*}
So that $\#(m,m,k,l)\approx {2m\choose m}(B_{m,k,l})$ where $B_{m,k,l}$ is the sum of products of Catalan and ballot numbers given in the four formulas above. The authors see no way to easily reduce the formula $B_{m,k,l}$, so we calculate the a few small cases of $k$ in the table below. For $m$ sufficiently large, $\#(m,m,k,l)$ is given in the table below: 
\begin{center}
\begin{tabular}{c|c|c|c|c}
k & $\#(m,m,k,0)$ & $\#(m,m,k,1)$ & $\#(m,m,k,2)$  \\ \hline
0 & ${{2m}\choose m}$ & & & \\ \hline
1 & ${{2m-1}\choose {m-1}}$ & ${{2m-1}\choose {m-1}}$ &  & \\ \hline
2 & $ {{2m-2}\choose {m}}+2{{2m-3}\choose {m-1}}+2{{m-2}\choose {m-2}}$ & $2{{2m-3}\choose{m-1}}$ & ${{2m-2}\choose m-2}$\\
\end{tabular}
\end{center}
\section{Future research}
Heuristic evidence suggests that for $m,n$ large, the density in equation \eqref{equ:kexceedancepaths}  
\begin{equation}
\label{equ:arcsine}
F(m,n,k,l/k)=\frac{k}{{{m+n}\choose m}}\#(m,n,k,l)
\end{equation}
is approximated by a beta distribution whose shape parameters depend on $m,n$ and $k$. The nature of this dependence could not be found. A particularly intriguing case is a symmetric $n=m=2k$, where it appears the distribution in equation \ref{equ:arcsine} converges, in probability, to the arcsine distribution as $k\to\infty$. This would imply that the distribution of random walks which start and end at the origin relate to the celebrated arcsine law of L\'evy. This is surprising because the arcsine law is for sufficiently long random walks with independent steps.  
\section{acknowledgement}
The second author would like to thank Steven Miller for several helpful suggestions during the preparation of this article.

C. Hoffman email: \email{{\href{mailto:hoffmace@flcc.edu}{hoffmace@flcc.edu}}}\\
C. Manack email: \email{{\href{mailto:cmanack1@fandm.edu}{cmanack1@fandm.edu}}}

\end{document}